\documentclass[11pt]{amsart}
\usepackage{amsmath, amssymb, mathrsfs}
\usepackage{mathpazo}
\usepackage{color}
\def\lbr{\left\{}
\def\rbr{\right\}}

\def\msf{{\mathscr F}}

\def\me{{\mathbb  E}}

\def\mr{{\mathbb  R}}

\def\mn{{\mathbb  N}}

\def\mp{{\mathbb  P}}

\newcommand{\rf}[1]{(\ref{#1})}
\def\lnfrac#1#2{\raise.7ex \hbox{\Small $#1$}
  \kern-.15em/\kern-.15em  \lower.2ex \hbox{\Small $#2$}}

\theoremstyle{plain}
\newtheorem{theorem}{Theorem}[section]

\newtheorem{lem}[theorem]{Lemma}
\newtheorem{prop}[theorem]{Proposition}

\newtheorem{remark}[theorem]{Remark}

\pagenumbering{arabic}


\numberwithin{equation}{section}

\usepackage{amsmath,amssymb,amscd,amsthm}

\usepackage{fullpage}


\usepackage[all,cmtip]{xy}
\usepackage[all]{xypic}

\usepackage{cite}

\usepackage{graphicx}
\DeclareGraphicsRule{.JPG}{eps}{*}{`jpeg2ps #1}

\usepackage{mathrsfs}



\newcommand{\smp}[1]{\left(#1 \right)}
\newcommand{\midp}[1]{\left[#1 \right]}
\newcommand{\bigp}[1]{\left\{#1 \right\}}



\newcommand{\R}{\mathbb{R}}

\newcommand{\N}{\mathbb{N}}


\newcommand{\E}{\mathcal{E}}







\newcommand{\divg}[1]{\mathrm{div}{\; #1}}

\providecommand{\abs}[1]{\left| #1\right|}
\providecommand{\norm}[1]{\left\lVert#1\right\rVert}


\usepackage{esint}

\newcommand{\ip}[1]{\langle #1 \rangle}


\title{Stochastic De Giorgi Iteration and Regularity of stochastic partial differential equations}

\author{Elton P. Hsu, Yu Wang and Zhenan Wang}
\address{}
\email{}


\begin{document}


\begin{abstract}
Under general conditions we show that the solution of a seimilinear stochastic parabolic partial differential equation of the form
\[
\partial_t u = \divg ( A \nabla u) + f (t,x, u) + g_i (t,x, u) \dot{w}^i_t
\]
with progressively measurable diffusion coefficients is almost surely H\"older continuous in both space and time variables.
\end{abstract}

\maketitle

\section{Introduction}
Stochastic partial differential equations (SPDEs) arise in many pure and applied sciences.  Regularity of solutions is of central importance for theoretical development as well as for numerical simulation. For linear equations with constant diffusion coefficients, $W^{k,2}$-theory has been well developed (see Pardoux~\cite{Par07} and Rozovskii~\cite{Roz90}), and a more general $W^{2,p}$-theory has been established by Krylov~\cite{Kry96}. Such equations can also be studied from a semigroup point of view (Brz\`ezniak, van Neerven, Veraar and Weis~\cite{BNVW08} and Da Prato and Zabczyk~\cite{PZ92}). Results concerning nonlinear equations can be found in Debussche, De Moor and Hofmanova~\cite{DMH14} and Pardoux~\cite{Par75}. In particular, many examples of semilinear SPDEs with measurable coefficients can be found in the survey monograph edited by Carmona and Rozovskii~\cite{Survey}. Although an obviously important question in applications,  regularity of solutions of semilinear SPDEs with random diffusion coefficients does not seem to have been adequately addressed in the literature.

In this paper we consider the following type of semilinear SPDEs on $\R^n$:
\begin{equation}
\label{basiceqn}
\partial_t u = \divg ( A \nabla u) + f (t,x, u) + g_i (t,x, u) \dot{w}^i_t,
\end{equation}
where $\{w^i\}$ is a sequence of independent standard Brownian motions on a filtered probability space $(\Omega, \msf_*, \mp)$, the diffusion coefficients $A$ are $\msf_* =\lbr\msf_t\rbr$-progressiviely measurable, and  $g = \{g_i\}$ is an $\ell^2$-valued function such that for each fixed $x$ and a progressively measurable process $h$, the process $g(t,x,h_t)$ is also progressively measurable. We will show that almost surely a stochastically strong solution with $L^2$-initial data is H\"older continuous in both space and (strictly positive) time variables and its H\"older norm has finite  moments of all orders.

The basic assumptions on the SPDE \eqref{basiceqn} are as follows:

(1) uniform ellipticity: $A(t, x; \omega)$ is $\msf_*$-progressively measurable and uniformly elliptic, i.e., there is a positive constant $\lambda$ such that
$$\lambda I\le A(t, x; \omega )\le \lambda^{-1}I, \quad  \forall  (t,x,\omega) \in \R_+\times \R^n \times \Omega .$$

(2) linear growth: there exist a nonnegative function $K\in L^2(\mr^n) \cap L^{\infty} (\R^n)$
and a positive constant $ \Lambda $ such that
\begin{equation}\label{assumption}
\abs{f(t,x,u)}+ \abs{g (t,x , u)}_{\ell^2} \leq K (x)+ \Lambda \abs{u} ,\quad \forall (t,x, u ) \in \R^+ \times \R^n \times\mr.
\end{equation}
We emphasize that no further conditions concerning the continuity $A, f$ or $g$ are imposed.
A stochastic process $u = u(t,x;\omega)$ is said to be a (stochastically strong) solution of \rf{basiceqn} if
it is an almost surely continuous $L^2$ process belonging to the space $L^2(\Omega\times \R^+, \mathscr{P}, W^{1,2}(\mr^n))$ and satisfies the SPDE \rf{basiceqn} in the sense that
\begin{equation}\label{interpret}
\ip{u (t) , \varphi} = \ip{u (0) , \varphi}- \int_0^t \ip{ A\nabla u (s),  \nabla \varphi } \;ds  + \int_{0}^t \ip{ f(u (s) ), \varphi }\;ds +\int_0^t \ip{ g_i(u(s) )  , \varphi}\,dw_s^i
\end{equation}
for all $\varphi \in C_{c}^{\infty} (\R^n)$. Here, $\mathscr{P}$ is the completion of the progressively measurable $\sigma$-algebra on $\Omega\times\mr_+$ under the product measure $\mp(d\omega)\times dt$, and $\langle\cdot, \cdot\rangle$ denotes the standard inner product on $L^2(\mr^n)$. The main result of the current work is the following moments estimate.

\begin{theorem}
Let $u$ be a (stochastically strong) solution of the SPDE \rf{basiceqn} with (non-random) initial data $u (0) =u_0$. Then for every $p>0$
there is a constant $C = C( n, \lambda, \Lambda, T,p)$ such that
\begin{equation*}
\me  \int_0^{2T} \norm{u (t)}_2^p \;dt  +
 \me \norm{u}_{\infty, [T,2T] \times \R^{n} }^p \leq  C\smp{  \norm{u_0}_2 + \Vert K\Vert_2+ \Vert K\Vert_\infty  }^p.
\end{equation*}
\end{theorem}

Using this moment estimate and following a suggestion from Professor Nicolai Krylov and the approaches used in Debussche, De Moor and Hofmanova\cite{DMH14}, we will prove the regularity for the solution.

\begin{theorem}
Let $u$ be a solution of the SPDE \rf{basiceqn} with a (deterministic) initial condition $u (0) =u_0\in L^2(\mr^n)$. Then there exists a positive exponent $\alpha=\alpha (n, \lambda, \Lambda)$ such that for all $T>0$ the solution $u \in C^{\alpha} ([T,2T] \times \R^n)$ almost surely. Furthermore, for every $p>0$, there is a constant $C=C (n, \lambda, \Lambda, T, p)$ such that
\[
\me \norm{u}_{C^{\alpha} ([T,2T] \times \R^n) }^p \leq C\smp{\norm{u_0}_{L^2(\mr^n)} +\Vert K\Vert_{L^2(\mr^n)} + \Vert K\Vert_{L^\infty(\mr^n)}}^p.
\]
\end{theorem}

\begin{remark}
In general, \eqref{basiceqn} may not admit a stochastically strong solution if the coefficients are merely progressively measurable. However, under some general conditions a weak solution exists, from which one can construct a strong solution on another probability space; see Carmona and Rozovskii~\cite{Survey} or Viot~\cite{Viot76} for a detailed exposition.
\end{remark}

The novelty of our result is that we do not impose any assumptions on the smoothness of $A, f$ or $g$.
Indeed, if $A$ and $g$ have some continuity, for example Dini continuity, then the above result follows directly from
Krylov\cite{Kry96, Kry99}. The approach we adopted in this work is quite different from the usual ones in the study of SPDEs.
Largely motivated by the recent work of Glatt-Holtz, \v{S}ver\'ak and Vicol \cite{GSV13} and Krylov \cite{Kry10}, rather than relying on abstract or explicit estimates of the solution kernel, we analyze the energy of the solution by a combination of PDE techniques and stochastic analysis. Indeed, our work can be viewed as a stochastic version of De Giorgi-Nash-Moser theory. As such our flexible method is potentially applicable to other types of nonlinear SPDEs.

The paper is orgainzed as follows. In {\sc Section} 2 we present a stochastic modification of De Giorgi's iteration. In {\sc Section} 3 we prove the decay of the tail probability of the solution. The main theorem stated above is proved in the last section.
\medskip

{\sc Acknowledgment.} The authors are very grateful to Professor Nicolai Krylov for the method used in {\sc Section} 4 of passing
from the $L^{\infty}$-bound to H\"older regularity. The authors thank Professors James Norris and  \'Etienne Pardoux for the electronic communications during the preparation of this work. They likewise thank Professors Luis Silvestre and Benjamin Gess for very helpful comments.

\section{Stochastic De Giorgi iteration}

De Giorgi's iteration is a classical method for studying elliptic and parabolic equations with measurable coefficients. In this section we develop a stochastic extension of this method appropriate for the type of SPDEs under investigation. See Cafarelli and Vasseur~\cite{CV10, CV10-1} for an exposition of the classical theory without random perturbation.

Throughout the paper, an $L^p$-norm without specifying a domain is implicitly assumed to be taken on $\R^n$; thus $\norm{ K}_p = \norm{ K}_{L^p(\R^n)}$. For  a time interval $I \subset \R^+$, we define the norm
\[
\norm{g}_{p_1, p_2, I} := \norm{ u }_{L^{p_1} ( I, L^{p_2 } (\R^n))} = \smp{ \int_{I} \norm{g}_{p_2}^{p_1} \;dt }^{1/p_1}.
\]
The norm most relevant for this paper is $\Vert \cdot\Vert_{4,2, I}$.

Let $I_k = [(1-2^{-k})T, 2T]$, a sequence of time intervals shrinking from $[0,2T]$ to $[T,2T]$. For each $a \geq 1$, write $u_{k,a} = (u-a(1-2^{-k}))^+$ and let
\[
 U_{k,a}:= \norm{u_{k,a}}_{4,2, I_k}^2 = \sqrt{\int_{I_k} \norm{u_{k,a}}_2^4\;dt}
\]
be the energy of $u$ on $I_k\times \mr^n$ above level $a(1-2^{-k})$.

For simplicity we will denote $f(t,x,u)$ and $g(t,x, u)$ by $f(u)$ and $g(u)$, respectively.  We have the following iterative inequality.

\begin{prop}
\label{S itr prop} Assume that the function $K(x)$ in the linear growth condition \eqref{assumption} satisfies $\Vert K\Vert_\infty\le 1$. Then for $n\geq 3$, there exists a constant  $C = C(n, \lambda, \Lambda, T)$ such that
\begin{equation}\label{S itr prop ineq}
U_{k,a} \leq \frac{C^k }{a^{2/(n+1)}} \smp{U_{k-1,a} + X^*_{k-1,a}} U_{k-1,a}^{1/(n+1)},
\end{equation}
where
\begin{equation}
\label{def X}
X_{k-1,a}^* = \sup_{(1-2^{-k})T\leq s\leq t \leq 2T}\int_s^t \ip{g_i (u(\tau)) , u_{k,a}(\tau) }\;dw_\tau^i.
\end{equation}
\end{prop}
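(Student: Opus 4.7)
The plan is to apply It\^o's formula to $\|u_{k,a}(t)\|_2^2$, bound the nonlinear forcing via the linear growth hypothesis together with Chebyshev's inequality (which captures the rapid shrinkage of the level set $E_k := \{u > a(1-2^{-k})\}$), and then extract the characteristic De Giorgi exponent $(n+2)/(n+1)$ through a single, carefully balanced Sobolev--H\"older interpolation in space-time.

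After a standard mollification of the cut-off $(u-c)^+$ with $c = a(1-2^{-k})$, It\^o's formula yields, for any $t_1 \leq t$ in $I_{k-1}$,
\[
\|u_{k,a}(t)\|_2^2 + 2\lambda\int_{t_1}^t \|\nabla u_{k,a}\|_2^2\,d\tau \leq \|u_{k,a}(t_1)\|_2^2 + 2\!\int_{t_1}^t\!\!\int f u_{k,a}\,dx\,d\tau + \int_{t_1}^t\!\!\int |g|^2 \mathbf{1}_{E_k}\,dx\,d\tau + 2(M(t)-M(t_1)),
\]
with $M$ the scalar martingale underlying $X^*_{k-1,a}$ and the quadratic-variation correction $\int |g|^2 \mathbf{1}_{E_k}$ arising because the Hessian of $\tfrac{1}{2}((u-c)^+)^2$ is $\mathbf{1}_{\{u>c\}}$. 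On $E_k$ one has $u \leq u_{k,a}+a$, $u_{k,a}\leq u_{k-1,a}$, and $u_{k-1,a}\geq a\,2^{-k}$, so Chebyshev gives $|E_k(\tau)| \leq 4^k a^{-2}\|u_{k-1,a}(\tau)\|_2^2$. Using $\|K\|_\infty \leq 1 \leq a$ and the linear growth bound, both forcing integrands are dominated pointwise in $\tau$ by $C(\Lambda)\,4^k\|u_{k-1,a}(\tau)\|_2^2$; a mean-value argument produces $t_1 \in I_{k-1}\setminus I_k$ with $\|u_{k,a}(t_1)\|_2^2 \leq C\,2^k U_{k-1,a}$; and bounding the martingale difference by $X^*_{k-1,a}$ and taking the supremum over $t \in I_k$ gives
\[
B := \sup_{I_k}\|u_{k,a}\|_2^2 + \lambda\int_{I_k}\|\nabla u_{k,a}\|_2^2\,d\tau \leq C^k\bigl(U_{k-1,a}+X^*_{k-1,a}\bigr).
\]

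The decisive step converts $B$ into $U_{k,a}$ while producing the factor $U_{k-1,a}^{1/(n+1)}$. Sobolev's inequality combined with H\"older on the support $E_k$ gives, pointwise in $\tau$, $\|u_{k,a}\|_2^2 \leq C\|\nabla u_{k,a}\|_2^2\,|E_k(\tau)|^{2/n}$. Splitting $\|u_{k,a}\|_2^4 = \|u_{k,a}\|_2^{2\alpha}\cdot\|u_{k,a}\|_2^{2(2-\alpha)}$, applying the pointwise bound only to the first factor, pulling the second out as a supremum, and using H\"older in $\tau$ with exponents $1/\alpha$ and $1/(1-\alpha)$ yields
\[
\int_{I_k}\|u_{k,a}\|_2^4\,d\tau \leq C\Bigl(\sup_{I_k}\|u_{k,a}\|_2^2\Bigr)^{2-\alpha}\Bigl(\int_{I_k}\|\nabla u_{k,a}\|_2^2\,d\tau\Bigr)^{\alpha}\Bigl(\int_{I_k}|E_k|^{\frac{2\alpha}{n(1-\alpha)}}\,d\tau\Bigr)^{1-\alpha}.
\]
The sharp choice $\alpha = n/(n+1)$ reduces the last exponent to exactly $2$, so $\int_{I_k}|E_k|^2\,d\tau \leq (4^k/a^2)^2 U_{k-1,a}^2$, and since $(2-\alpha)+\alpha = 2$ the first two factors combine to $B^2$. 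Collecting powers of $a$ produces $U_{k,a}^2 \leq C^k a^{-4/(n+1)}B^2 U_{k-1,a}^{2/(n+1)}$, and extracting the square root gives \eqref{S itr prop ineq}. The main obstacle is precisely this balancing: $\alpha = n/(n+1)$ is the unique choice that makes the H\"older integrand of $|E_k|$ exactly integrable against $U_{k-1,a}^2$ while simultaneously producing the prescribed power $a^{-2/(n+1)}$; any other $\alpha$ breaks one of these two relations.
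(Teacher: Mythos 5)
Your proof is correct and follows the same overall strategy as the paper: an It\^o energy estimate giving $\sup_{I_k}\Vert u_{k,a}\Vert_2^2+\int_{I_k}\Vert\nabla u_{k,a}\Vert_2^2\,d\tau\le C^k(U_{k-1,a}+X^*_{k-1,a})$ (with the level-set observations $a\le 2^k u_{k-1,a}$, $u\le(1+2^k)u_{k-1,a}$ and a mean-value choice of the starting time), combined with Chebyshev and a Sobolev interpolation to close the iteration. The only genuine difference is in how the interpolation step is organized: the paper first applies H\"older in space with exponents $(n+1)/n$ and $n+1$ to peel off $U_{k-1,a}^{1/(n+1)}$, and then controls the resulting mixed norm $\Vert u_{k,a}\Vert_{4(n+1)/n,\,2(n+1)/n,\,I_k}^2$ by the $L^{p_1}_tL^{p_2}_x$ interpolation inequality plus Sobolev; you instead apply Sobolev together with the level-set measure pointwise in time, $\Vert u_{k,a}\Vert_2^2\le C\Vert\nabla u_{k,a}\Vert_2^2\,\vert E_k\vert^{2/n}$, and then run a single H\"older in time with the balanced exponent $\alpha=n/(n+1)$. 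The two routes are arithmetically equivalent and yield identical exponents; your version makes the origin of the gain $1/(n+1)$ perhaps more transparent, while the paper's version isolates the energy norm before any level-set information is used. One small caveat your write-up shares with the paper's main argument: the embedding $\dot W^{1,2}\hookrightarrow L^{2n/(n-2)}$ requires $n\ge 3$, so for $n=2$ the exponent must be degraded slightly, as in Remark~\ref{n=2 rem}.
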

\begin{proof} H\"older's inequality with the conjugate exponents $(n+1)/n$ and $n+1$ gives
\begin{equation}
\Vert u_{k,a} (t)\Vert_2^2  \leq \norm{ u_{k,a} (t)}^{2}_{2(n+1)/n} \cdot \abs{ \lbr u_{k,a} ( t) > 0 \rbr}^{1/(n+1)}.
\label{holder}
\end{equation}
Using Chebyshev's inequality, we have
\[
\abs{\lbr u_{k,a} ( t) > 0\rbr }= \abs{\lbr u_{k-1,a} ( t) > 2^{-k}a \rbr}\leq
\left(\frac{2^k}a\right)^2\norm{u_{k-1,a} (t)}^2_2.
\]
Squaring \rf{holder} and integrating with respect to $t$ on $I_k$ we have
\[
U_{k,a}^2 \leq \left(\frac{2^k}a\right)^{4/(n+1)}\int_{I_k} \norm{u_{k,a} (t)}^4_{2(n+1)/n} \norm{u_{k-1,a}(t) }^{4/(n+1)}_{2} \,dt.
\]
Applying H\"older's inequality again with the same conjugate exponents, we obtain
\begin{equation}
\label{r2 eq 1}
\begin{split}
U_{k,a} & \leq \left(\frac{2^k}a\right)^{2/(n+1)} \smp{ \int_{I_k} \norm{u_{k,a}(t)}^{4(n+1)/n}_{2(n+1)/n} dt }^{n/2(n+1)}  \smp{ \int_{I_k} \norm{u_{k-1,a}(t)}_2^4\;dt  }^{1/2(n+1)}.
\end{split}
\end{equation}
The third factor on the right side can be estimated by $U_{k-1,a}^{1/(n+1)}$.
The second factor is exactly $\Vert u_{k,a}\Vert_{4(n+1)/n, 2(n+1)/n, I_k}^2$.  We claim that
\begin{equation}
\Vert u_{k,a}\Vert_{4(n+1)/n, 2(n+1)/n, I_k}^2\le\sup_{t \in  I_{k}}\norm{u_{k,a} (t)}_{2}^2  +  \int_{I_{k}} \norm{ u_{k,a} (t) }_{2n/(n-2)}^2 \; dt. \label{interpolation}
\end{equation}
To prove this inequality we use the $L^p_tL^{q}_x$ interpolation inequality
$$\Vert u\Vert_{r_1,r_2, I}\le \Vert u\Vert^\gamma_{p_1, p_2, I}\Vert u\Vert^{1-\gamma}_{q_1, q_2, I}$$
with
$$\frac1{r_1} = \frac{\gamma}{p_1} + \frac{1-\gamma}{q_1}, \quad\frac{1}{r_2} = \frac{\gamma}{p_2}+\frac{1-\gamma}{q_2}.$$
Using this inequality with the parameters
$$r_1 = \frac{4(n+1)}n, \quad r_2 = \frac{2(n+1)}n, \quad p_1 = \infty, \quad q_1 = p_2 = 2,  \quad q_2 = \frac{2n}{n-2}, \quad \gamma = \frac{n+2}{2(n+1)}$$
followed by the elementary inequality
$$ab\le \frac{a^p}p + \frac {b^q}q\le a^p + b^q$$
with $p = 2(n+1)/(n+2)$ and $q = 2(n+1)/n$ we obtain \rf{interpolation} immediately.

Applying the Sobolev inequality on $\mr^n$ to the second term on the right side of \rf{interpolation} and then substituting the result in \eqref{r2 eq 1}, we obtain
\begin{equation}
\label{itr p 1}
U_{k,a} \leq C\left(\frac{2^k}a\right)^{2/(n+1)} \midp{ \sup_{t \in  I_{k}}\norm{u_{k,a} (t)}_{2}^2  +  \int_{I_{k}} \norm{\nabla  u_{k,a} (t) }_{2}^2dt}  U_{k-1,a}^{1/(n+1)}.
\end{equation}

We now  come to the key step of the proof, namely using It\^o's formula to bound the terms involving the supremum over $I_k$ and the gradient of $u$. The function $h_r(u) = \abs{ (u -r)^+}^2 $ is piecewise smooth with continuous derivative and its second derivative has a single point of discontinuity (a jump) at $u = r$. The quadratic variation process of the martingale part of the process $u (t)$ is absolutely continuous with respect to the Lebesgue measure on $\mr_+$. Thus, formally applying It\^o's formula and the SPDE \rf{basiceqn} to the composition $h_{a_k} ( u(t) ) = \vert u_{k,a}(t)\vert^2$ we have
\begin{align}\label{ito}
d\Vert u_{k,a}(t)\Vert^2_{2}= &-2\ip{\nabla u_{k,a}(t), A \nabla  u_{k,a} (t) } dt+  2 \ip{g_i (u)  , u_{k,a} (t)}  dw^i_t\\
&+\left[\int_{\R^n} \lbr \abs{g (u(t))}^2  +2u_{k,a}(t) f (u(t))\rbr 1_{\{ u_{k,a} (t)>0 \}} dx\right]  dt.\nonumber
&
\end{align}
The validity of the above application of It\^o's formula can be fully justified, see {\sc Remark} \ref{justify} below.

We now apply the uniform ellipticity assumption to the first term on the right side of \rf{ito}.  For the third term,  we observe that  if $u_{k,a}> 0$, then the inequalities
$1\leq a \leq 2^{k} u_{k-1,a}$ and $0<u \leq u_{k-1,a} + a \leq (1+ 2^{k}) u_{k-1,a}$ hold. By the linear growth condition (2) on $f$ and $g$, the fact $u_{k,a}\leq u_{k-1,a}$ and the assumption $\norm{K}_\infty \leq 1$, this term is bounded by $C^k\Vert u_{k-1,a}\Vert_2^2\, dt$ for some $C$. Now, integrating \rf{ito} from $t_0$ to $t$ with $t_0 \in  I_{k-1}\setminus I_{k}$ and $t \in  I_{k}$ gives
\begin{equation*}
\Vert u_{k,a}(t)\Vert^2_{2} + \lambda \int_{t_0}^t\norm{\nabla u_{k,a}(s)}_2^2ds \leq  \Vert u_{k,a} (t_0)\Vert_{2}^2+C^k U_{k-1,a} + \int_{t_0}^t\ip{g_i (u(s))  , u_{k,a}(s)}dw^i_s.
\end{equation*}
Taking the supremum over $t \in I_{k} $, we have for some constant $C$ depending only on $n,\lambda$ and $\Lambda$,
\begin{equation}
\label{Itr l2 2}
\begin{split}
& \sup_{ t\in I_k}\Vert u_{k,a} (t)\Vert_{2}^2  + \int_{t_0}^{2} \norm{\nabla u_{k,a} (s) }_{2}^2 \; ds \leq   C  \Vert u_{k,a} (t_0)\Vert_{2}^2+C^k U_{k-1,a}
+ CX^*_{k-1,a}
\end{split}
\end{equation}
with $X^*_{k-1, a}$ as defined in \eqref{def X}.
Noting the fact that $u_{k,a}\leq u_{k-1,a}$, we can find a $t_0 \in I_{k-1} \setminus I_{k} $ by the mean value theorem such that
\begin{equation}
\label{mvt}
\norm{u_{k,a} (t_0)}_{2}^2 = \frac{1}{\abs{ I_{k-1} \setminus I_{k}}  }\int_{I_{k-1}\setminus I_{k}} \norm{u_{k,a} (t)}_{2}^2 \;dt \leq 2^{k}T^{-1} U_{k-1,a}.
\end{equation}
Combining \eqref{itr p 1}, \eqref{Itr l2 2} and \eqref{mvt},  we obtain the desired iterative inequality \eqref{S itr prop ineq}.
\end{proof}

\begin{remark} \label{n=2 rem}
In the cases $n=1$ or $2$, the proof in this section shows that for any $\mu\in (0, 1/3)$, there is a constant $C = C(n, \lambda, \Lambda, T, \mu)$  such that
$$U_{k,a} \leq \frac{C^k }{a^{2 \mu}} \smp{U_{k-1,a} + X^*_{k-1,a}} U_{k-1,a}^{\mu}.$$
This is sufficient for estimating the tail probability of $\norm{u}_{\infty}$ in the next section, for all we need is that the factor $U_{k-1,a}$ carries an exponent strictly greater than $0$.
\end{remark}

\begin{remark}\label{justify}
For the justification of the It\^o expansion in \rf{ito}, we use a sequence $\varphi_\epsilon$ of smooth approximations of the function $h_r(u) = \vert (u-r)^+\vert^2$. In the definition \rf{interpret} of a solution, we use an approximation of the identity $\zeta_\delta$ as the test function. The desired expansion is obtained by letting $\delta\rightarrow0$ and then $\epsilon\rightarrow 0$. The details of these passing to the limit are very similar to those in Krylov~\cite{Kry10}.
\end{remark}

\section{Estimate of the tail probability}

In the context of the stochastic De Giorgi iteration, controlling the size of $\Vert u^+\Vert_{\infty, [T, 2T]\times\mr^n}$ means estimating the decay of the tail probability $\mp\lbr \Vert u\Vert_{\infty, [T, 2T]\times\mr^n}\ge a\rbr$. In order to use the iterative inequality in {\sc Proposition} \ref{S itr prop} for this purpose we need to show that $X_{k-1, a}^*$ is comparable with $U_{k-1, a}$.  This is accomplished in  {\sc Lemma} \ref{A-X} below, whose proof depends on the following simple result from stochastic analysis (see Norris~\cite[page 123]{Nor86}).

\begin{lem} \label{simple} Suppose that $\lbr M_t\rbr$ is a continuous local martingale. Then we have
$$\mp\lbr \sup_{0\le s\le t\le S}(M_t-M_s)\ge a, \, \langle M\rangle_S\le b\rbr\le 2e^{-a^2/4b}.$$
\end{lem}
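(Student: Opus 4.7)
The plan is to apply the exponential supermartingale estimate (Bernstein's inequality for continuous martingales) after a stopping-time reduction. For each $\lambda \in \mr$, the process $Z^\lambda_t := \exp(\lambda M_t - \lambda^2\langle M\rangle_t/2)$ is a nonnegative local martingale with $Z^\lambda_0 = 1$, hence a supermartingale, so Doob's maximal inequality gives $\mp\{\sup_t Z^\lambda_t \ge c\} \le 1/c$ for every $c > 0$.

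First I would translate to assume $M_0 = 0$ (the event only involves differences $M_t - M_s$) and localize by stopping at $\tau = \inf\{t: \langle M\rangle_t > b\}\wedge T$. On $\{\langle M\rangle_T \le b\}$ one has $\tau = T$ and $M^\tau = M$ on $[0,T]$, while $\langle M^\tau\rangle_\infty \le b$ always. So it suffices to bound the drawup of a continuous martingale whose quadratic variation is uniformly bounded by $b$.

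Next I would derive the one-sided Bernstein bound: since $Z^\lambda_t \ge \exp(\lambda M_t - \lambda^2 b/2)$ under the stopping, Doob combined with the choice $\lambda = c/b$ yields $\mp\{\sup_t M_t \ge c\} \le \exp(-c^2/(2b))$. To pass from this to a bound on the drawup, I would use the triangle-inequality estimate $M_t - M_s \le \sup_u M_u + \sup_v(-M_v)$ (valid because $M_0 = 0$): the event $\{\sup_{s\le t}(M_t - M_s) \ge a\}$ is contained in $\{\sup_u M_u \ge a/2\}\cup\{\sup_v(-M_v)\ge a/2\}$, and applying the one-sided bound to both $M$ and $-M$ and taking a union bound yields an exponential decay of the stated form.

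The main point of care will be extracting precisely the constant $1/(4b)$ in the exponent, since a naive application of the split above produces $2\exp(-a^2/(8b))$. I would sharpen this by applying Doob's inequality directly to the symmetric positive martingale $\cosh(\lambda M_t)\exp(-\lambda^2\langle M\rangle_t/2)$, whose running maximum simultaneously controls $\sup M$ and $\sup(-M)$ via the pointwise bound $\cosh(x) \ge e^{|x|}/2$; optimizing the single parameter $\lambda$ once (rather than splitting and taking a union bound over two separate thresholds) then recovers the desired exponent. This refinement is the only non-routine step; the remaining ingredients are standard applications of the exponential supermartingale technique.
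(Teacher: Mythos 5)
Your route --- exponential supermartingale plus a stopping-time localization --- is genuinely different from the paper's, which instead invokes the Dambis--Dubins--Schwarz representation $M_t=B_{\langle M\rangle_t}$ and the reflection principle for the resulting Brownian motion. The first four steps of your argument are correct: the reduction to $M_0=0$, stopping at $\tau=\inf\{t:\langle M\rangle_t>b\}\wedge T$, the one-sided bound $\mp\{\sup_t M_t\ge c\}\le e^{-c^2/2b}$, and the inclusion of the drawup event in $\{\sup_u M_u\ge a/2\}\cup\{\sup_v(-M_v)\ge a/2\}$ together yield $2e^{-a^2/8b}$. That is a complete, self-contained proof of a bound of the required Gaussian type, and it is all that the paper actually uses: in Lemma \ref{A-X} the estimate enters only as $Ce^{-\alpha^2/C^k}$ with unspecified constants, so both the prefactor $2$ and the $8$ in place of $4$ are harmless.

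The gap is in your final ``sharpening.'' The process $N_t=\cosh(\lambda M_t)\,e^{-\lambda^2\langle M\rangle_t/2}$ is indeed a nonnegative local martingale with $N_0=1$, but on $\{\sup_t|M_t|\ge a/2\}$ the only available lower bound at the hitting time is $N\ge \tfrac12 e^{\lambda a/2-\lambda^2 b/2}$, precisely because of the factor $\tfrac12$ in $\cosh x\ge e^{|x|}/2$. Doob then gives $2e^{-\lambda a/2+\lambda^2 b/2}$, and optimizing at $\lambda=a/2b$ returns exactly $2e^{-a^2/8b}$ again: the factor of $2$ you hoped to eliminate reappears, and the exponent is still dictated by the threshold $a/2$, so nothing is gained over the union bound. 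Moreover, no argument can reach the constant as literally stated: taking $M=B$ a Brownian motion on $[0,b]$, L\'evy's theorem gives $\sup_{0\le s\le t\le b}(B_t-B_s)\overset{d}{=}\sup_{t\le b}|B_t|$, and as $a/\sqrt b\to 0$ the probability $\mp\{\sup_{t\le b}|B_t|\ge a\}=1-O(e^{-\pi^2 b/8a^2})$ exceeds $e^{-a^2/4b}\approx 1-a^2/4b$, so the inequality with $e^{-a^2/4b}$ on the right fails for small $a/\sqrt b$. (The paper's own proof has the same feature: the event forces $\sup_{t\le b}|B_t|\ge a/2$ rather than $\sup_{t\le b}B_t\ge a/2$, and the Gaussian tail then gives $e^{-a^2/8b}$, not $e^{-a^2/4b}$.) The honest fix is to state and prove the lemma with the bound $2e^{-a^2/8b}$, which your steps one through four already accomplish and which suffices for every subsequent application.
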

\begin{proof} According to the Dambis, Dubins-Schwarz theorem (see Revuz and Yor \cite[Chapter V, Section 1, Theorem 1.6]{RY99}), there is a Brownian motion $B$ such that $M_t-M_0 = B_{\langle M\rangle_t}$, hence the event in the statement implies the event
$\displaystyle\bigp{ \sup_{0\le t\le b} B_t\ge a/2}$ or $\displaystyle\bigp{ \inf_{0\le t\le b} B_t\le -a/2}$.  Since $\sup_{0\le t\le b} B_t$ has the same distribution as $\sqrt b\vert B_1\vert$ by the reflection principle, we obtain the inequality from the explicit density function of a standard Gaussian random variable.
\end{proof}

Consider the continuous martingale
\begin{equation*} \label{spx}
X_t :=\int_{0}^t \ip{g_i (u(s)) , u_{k+1,a}(s)} \;dw^i_s
\end{equation*}
and recall from \rf{def X} that $X_{k,a}^* = \sup_{(1-2^{-k-1})T\le s\le t\le 2T}(X_t-X_s)$.

\begin{lem}\label{A-X} Assume that $\Vert K\Vert_\infty\leq 1$.  There exists a constant $C = C(n, \lambda,\Lambda)$ such that for all positive $\alpha$ and $\beta$,
\[
\mp\lbr X_{k,a}^*\ge\alpha\beta, \;  U_{k,a} \leq\beta\rbr \leq {C} \, e^{ -\alpha^2/C^k}.
\]
\end{lem}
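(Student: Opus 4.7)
The plan is to apply Lemma \ref{simple}---the Gaussian tail bound for continuous local martingales---to the martingale $X$ on the interval $I_k = [1-2^{-k}, 2]$. Since that lemma yields a bound of the form $e^{-a^2/(4b)}$ once the quadratic variation is controlled by $b$, the entire problem reduces to producing a pathwise upper bound $\langle X\rangle_2 - \langle X\rangle_{1-2^{-k}} \le C^k\beta^2$ on the event $\{U_{k,a}\le\beta\}$.

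From the definition of $X$ one has
\[
\langle X\rangle_2 - \langle X\rangle_{1-2^{-k}} = \int_{I_k} \sum_i \ip{g_i(u(\tau)), u_{k+1,a}(\tau)}^2 \, d\tau.
\]
Cauchy--Schwarz in $L^2(\mr^n)$, using that $u_{k+1,a}$ vanishes outside $\{u_{k+1,a}>0\}$, bounds the integrand by $\norm{u_{k+1,a}}_2^2 \cdot \int |g(u)|_{\ell^2}^2 \, 1_{\{u_{k+1,a}>0\}}\,dx$. The main analytic step is then a pointwise estimate on the super-level set, entirely parallel to the one used in the proof of Proposition \ref{S itr prop}: on $\{u_{k+1,a}>0\}$ one has $u_{k,a} \ge a \, 2^{-(k+1)}$, hence $a\le 2^{k+1}u_{k,a}$ and $u\le (1+2^{k+1})u_{k,a}$. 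Combined with the linear growth hypothesis $|g(u)|_{\ell^2}\le K(x)+\Lambda|u|$ and the standing assumptions $\norm{K}_\infty\le 1$ and $a\ge 1$, this yields the pointwise bound $|g(u(\tau,x))|_{\ell^2} \le C^k u_{k,a}(\tau,x)$ on $\{u_{k+1,a}>0\}$, for a constant $C$ depending only on $\Lambda$.

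Substituting this and using the trivial monotonicity $\norm{u_{k+1,a}}_2 \le \norm{u_{k,a}}_2$ gives
\[
\langle X\rangle_2 - \langle X\rangle_{1-2^{-k}} \le C^{2k} \int_{I_k} \norm{u_{k,a}(\tau)}_2^4\, d\tau = C^{2k} U_{k,a}^2 \le C^{2k}\beta^2
\]
on $\{U_{k,a}\le\beta\}$. Applying Lemma \ref{simple} to the time-shifted martingale $t \mapsto X_{1-2^{-k}+t} - X_{1-2^{-k}}$ (whose running supremum of increments equals $X_{k,a}^*$) with parameters $a = \alpha\beta$ and $b = C^{2k}\beta^2$ then produces a bound of the form $\exp(-\alpha^2/(C\cdot C^{2k}))$; absorbing constants into a single $C = C(n,\lambda,\Lambda)$ yields the stated inequality. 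The only real difficulty is the bookkeeping of the $2^k$ factors through the super-level set argument; the remainder is a direct application of Cauchy--Schwarz followed by the Gaussian martingale tail bound.
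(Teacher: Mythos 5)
Your proposal is correct and follows essentially the same route as the paper: reduce the lemma to the pathwise bound $\langle X\rangle_2-\langle X\rangle_{1-2^{-k}}\le C^kU_{k,a}^2$ via the super-level-set observation $a\le 2^{k+1}u_{k,a}$, $u\le(1+2^{k+1})u_{k,a}$ together with the linear growth of $g$, and then apply the Gaussian tail bound of Lemma \ref{simple} to the shifted martingale. The only cosmetic difference is that you use Cauchy--Schwarz termwise in $i$ where the paper uses Minkowski's integral inequality to pass to $\bigl(\int|g(u)|_{\ell^2}u_{k+1,a}\,dx\bigr)^2$; both yield the same $C^k\norm{u_{k,a}}_2^4$ bound.
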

\begin{proof} Let $T_k = (1-2^{-k-1})T$ for simplicity. If we can show that there is a constant $C$ such that
\begin{equation}
\label{A-Q}
\langle X\rangle_{2T} - \langle X\rangle_{T_k} \leq C^k U_{k,a}^2,
\end{equation}
then
$$\lbr X^*_{k, a}\ge\alpha\beta, U_{k, a}\le\beta\rbr\subset\lbr \sup_{T_k\le s\le t\le 2T}(X_t-X_s)\ge\alpha\beta,
\langle X\rangle_{2T} - \langle X\rangle_{T_k}\le C^k\beta^2\rbr$$
and the desired estimate follows immediately from {\sc Lemma} \ref{simple}.  To prove \rf{A-Q}, we start with
$$\langle X\rangle_{2T} - \langle X\rangle_{T_k} = \sum_{i\in\mn}\int_{I_k}\langle g_i(u), u_{k+1}\rangle^2\, ds,$$
which follows from the definition of $X_t$.
We observe that  if $u_{k+1,a}> 0$, then the inequalities
$1\leq a \leq 2^{k+1} u_{k,a}$ and $0<u \leq u_{k,a} + a \leq (1+ 2^{k+1}) u_{k,a}$ hold. By Minkowski's inequality (integral form), the linear growth condition (2) on $f$ and $g$ and the fact $u_{k+1,a}\leq u_{k,a}$ we have
\begin{equation*}
\label{A-X p1}
\sum_{i\in \N} \smp{  \int_{\R^n } g_i (u) \; u_{k+1,a} \; d x}^2  \leq  \smp{
\int_{\R^n} |g (u)|  u_{k+1,a} \;d x}^2 \leq C^k \smp{\int_{\R^n}
u_{k,a}^2 \;d x }^2.
\end{equation*}
Integrating over the interval $I_k$ we obtain the desired inequality \rf{A-Q}.
\end{proof}

Armed with the iterative inequality \rf{S itr prop ineq} and the comparison result {\sc Lemma} \ref{A-X} we are in a position to control the size of $\Vert u^+\Vert_{\infty, [T,2T]\times\mr^n}$ by estimating its tail probability. Without loss of generality we will only work with the case $T=1$. It is important that the constant $M_0$ in the following proposition is independent of $a$.

\begin{prop}
\label{Itr Nor} Assume that $\Vert K\Vert_\infty\le 1$. There exists a constant $ M_0 = M_0(n, \lambda, \Lambda)$ such that for all  $a \geq 1$ and $ M > M_0$,
\[
\mp\lbr\norm{u^+}_{\infty, [1,2] \times \R^n} > a, \; M \norm{u^+}_{4,2,[0,2]} \leq a\rbr \leq e^{-M^{\delta}}.
\]
Here, $\delta=1/(n+1)$ when $n\geq 3$ and $\delta$ can be any value from $(0,1/3)$ when $n=1$ or $2$.
\end{prop}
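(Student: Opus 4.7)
The plan is to iterate the inequality of Proposition \ref{S itr prop} on a good event controlling the martingale excursions $X^*_{k,a}$, and to show the complement of this good event has probability bounded by $e^{-M^{1/(n+1)}}$. The hypothesis $M\Vert u^+\Vert_{4,2,[0,2]}\le a$ is exactly $U_{0,a}\le(a/M)^2$, so the starting value is small. If the iteration can be made to drive $U_{k,a}\to 0$, then since $u_{k,a}\ge(u-a)^+$ and $I_k\supset[1,2]$ one has $U_{k,a}\ge\Vert(u-a)^+\Vert_{4,2,[1,2]}^2$, which forces $(u-a)^+=0$ a.e.\ on $[1,2]\times\R^n$ and hence $\Vert u^+\Vert_{\infty,[1,2]\times\R^n}\le a$, the complement of the event to be bounded.

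I would introduce deterministic comparison bounds $\beta_k$ by $\beta_0=(a/M)^2$ and the recursion
\[
\beta_k = \frac{C^k(1+\alpha_{k-1})}{a^{2/(n+1)}}\,\beta_{k-1}^{1+1/(n+1)},
\]
together with good events $G_k=\lbr X^*_{k-1,a}\le\alpha_{k-1}\beta_{k-1}\rbr$, for a sequence $\alpha_k\ge 1$ to be chosen. An induction on $k$ using Proposition \ref{S itr prop} shows $U_{j,a}\le\beta_j$ for all $j\le k$ on $G_1\cap\cdots\cap G_k$. Lemma \ref{A-X}, applied with shifted index $k-1$ and with $\alpha=\alpha_{k-1}$, $\beta=\beta_{k-1}$, bounds each step's failure by $\mp(G_k^c,\,U_{k-1,a}\le\beta_{k-1})\le C\exp(-\alpha_{k-1}^2/C^{k-1})$, so by a union bound the proposition reduces to choosing $\alpha_k$ so that (i) $\beta_k\to 0$ and (ii) $\sum_{k\ge 1}C\exp(-\alpha_{k-1}^2/C^{k-1})\le e^{-M^{1/(n+1)}}$ hold simultaneously for all $M\ge M_0(n,\lambda,\Lambda)$.

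The main obstacle is balancing the growth of $\alpha_k$. For the Gaussian tail in Lemma \ref{A-X} to give a $k$-summable failure probability, $\alpha_k$ must grow faster than $C^{k/2}$; but the factors $(1+\alpha_{k-1})$ also feed back into the deterministic iteration and, if they grow too fast, swamp the super-geometric De Giorgi decay encoded in the telescoping identity
\[
V_k \le V_0^{(1+\delta)^k}\prod_{j=1}^k \bigl(C^j(1+\alpha_{j-1})\bigr)^{(1+\delta)^{k-j}}, \quad V_k := \beta_k/a^2,\ \delta:=1/(n+1).
\]
The narrow window can be hit by $\alpha_k = M^{1/(2(n+1))}C^k$: the first failure term is then $Ce^{-M^{1/(n+1)}}$ and dominates the series since the subsequent terms decay doubly geometrically in $k$, while in the telescoping identity the contribution of $\prod\alpha_{j-1}^{(1+\delta)^{k-j}}$ to the $(1+\delta)^k$-th root is a fixed power of $M$ strictly smaller than $M^2$, so $V_0=M^{-2}$ defeats it and $V_k\to 0$ super-geometrically once $M\ge M_0$. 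The exponent $1/(n+1)$ appearing in the statement is the exact pressure point of this balance between the Gaussian tail strength and the De Giorgi threshold.
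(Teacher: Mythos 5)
Your proposal follows essentially the same route as the paper: run the De Giorgi iteration of Proposition \ref{S itr prop} on a nested sequence of good events controlling $X^*_{k,a}$, note the crucial cancellation of $a^{2/(n+1)}$, and bound the complement by Lemma \ref{A-X} plus a union bound over the first failure index; the paper merely packages the bookkeeping differently, prescribing a geometric target $\beta_k=(a/M)^2\gamma^k$ with $\alpha=(2C)^{k/2}M^{1/(n+1)}$ instead of your exact telescoped recursion. One quantitative point: your choice $\alpha_k=M^{1/(2(n+1))}C^k$ sits exactly on the boundary, since the resulting series sums to $C'e^{-M^{1/(n+1)}}$ with $C'>1$ coming from Lemma \ref{A-X}, which does not quite give the stated bound $e^{-M^{1/(n+1)}}$; the window is in fact $\alpha_k=M^{\theta}C^k$ with $\theta\in(\delta/2,\,2\delta)$, $\delta=1/(n+1)$, and taking, say, $\theta=\delta$ makes the tail terms $Ce^{-M^{2\delta}C^{k-1}}$, leaving room to absorb the constant for $M\ge M_0$, while the telescoped product still converges because the $M$-power contributed per $(1+\delta)^k$ is then $1<2$.
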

\begin{proof} As in the classical theory, we start with the observation that $\bigp{\norm{u^+}_{\infty, [1,2] \times \R^n}>a} \subset G_a^c$,
where $G_a = \bigp{\lim_{k \rightarrow \infty} U_{k,a} = 0}$. Consider the events $\mathcal{E}_{k} = \{U_{k,a} \leq (a/M)^2\gamma^k \}$ for a constant $\gamma<1$ to be determined later. Since $\norm{u}_{4,2,[0,2]} = \sqrt{U_{0, a}}$, it suffices to prove
\[
\mp\lbr G_a^c\cap\mathcal{E}_0\rbr\le e^{-M^\delta}.
\]
It is clear that
\[
G_{a}^c \subset \bigcup_{k \geq 0} \mathcal{E}_{k}^c  \subset  \mathcal{E}_{0}^c \cup \midp{ \bigcup_{k \geq 1} \smp{ \mathcal{E}_{k}^c \cap \mathcal{E}_{k-1} } },
\]
which implies
\begin{equation}
\label{Itr Nor 1}
\mp\lbr G_{a}^c \cap \E_{0}\rbr \leq \sum_{k \geq 1} \mp \lbr\mathcal{E}_{k}^c \cap \mathcal{E}_{k-1} \rbr.
\end{equation}
We estimate the probability  $\mp\lbr\mathcal{E}_{k}^c \cap \mathcal{E}_{k-1}\rbr $. We take $\alpha = (2C)^{k/2}M^\delta$ with the $C$ from {\sc Lemma} \ref{A-X}, and apply the lemma with this $\alpha$ and $\beta =  a^2\gamma^{k-1}/M^2$. If $ X^*_{k-1,a} \leq \alpha\beta$ and $U_{k-1,a} \leq \beta $, then by the iterative inequality \rf{S itr prop ineq} in {\sc Proposition \ref{S itr prop}} we have (after canceling $a^{2\delta}$!)
$$U_{k,a} \le\frac{C_1^k}{a^{2\delta}}(\beta + \alpha\beta)\beta^{\delta} = \frac{(C_1\gamma^{\delta})^k(1+(2C)^{k/2}M^\delta)}{\gamma^{1+\delta}M^{2\delta}}\cdot \gamma\beta\le\gamma\beta.$$
The last inequality holds if we choose $\gamma$ sufficiently small such that $(C_1\gamma^{\delta})^k(1+(2C)^{k/2}M^\delta)\le M^\delta$ for all $k\ge 1$ and $M\ge 1$ and then $M$ sufficiently large such that $\gamma^{1+\delta}M^\delta\ge 1$.

Now the above inequality implies that $\mathcal{E}_{k}^c \cap \mathcal{E}_{k-1 }  \subset \{ X^*_{k-1,a} >\alpha\beta, U_{k-1,a} \leq \beta \} $. Its probability is estimated by  {\sc Lemma} \ref{A-X} and we have
$$\mp \lbr \mathcal{E}_{k}^c \cap \mathcal{E}_{k-1} \rbr  \leq C e^{-\alpha^2/C^k} = Ce^{-2^{k}M^{2\delta}}.$$
Using this in \eqref{Itr Nor 1} we obtain, again for sufficiently large $M$,
\[
\mp\lbr G_{a}^c \cap \mathcal{E}_{0}^c\rbr \leq C\sum_{k=1}^{\infty} e^{-2^{k} M^{2\delta}} \leq   e^{- M^{\delta} }.
\]
This completes the proof of {\sc Proposition} \ref{Itr Nor}.
\end{proof}

\section{Moments estimate and H\"older continuity}
In this section, we first prove our main result, namely the moments estimate of the solution of the SPDE \rf{basiceqn} subject to the conditions stated in {\sc Section} 1. Then we will prove the almost surely H\"older continuity of the solution. We restate the moments estimate here.
\begin{theorem}
\label{Estimate}
Let $u$ be a (stochastically strong) solution of the SPDE \rf{basiceqn} with (non-random) initial data $u (0) =u_0$. Then for every $p>0$
there is a constant $C = C( n, \lambda, \Lambda, T,p)$ such that
\begin{equation*}
\me  \int_0^{2T} \norm{u (t)}_2^p \;dt  +
 \me \norm{u}_{\infty, [T,2T] \times \R^{n} }^p \leq  C\smp{  \norm{u_0}_2 + \Vert K\Vert_2+ \Vert K\Vert_\infty  }^p.
\end{equation*}
\end{theorem}
\begin{proof} By scaling it suffices to consider the case $T=1$,  $\Vert K\Vert_2+ \Vert K\Vert_\infty \le 1$, and $\norm{u_0}_2 \leq 1$. We need to show that there exists a constant $C$ (depending on $p$ of course) such that
\begin{equation}\label{4.1}
\me \int_0^{2} \norm{u (t)}_2^p \;dt  \leq  C\quad\text{and}\quad
 \me \norm{u}_{\infty, [1,2] \times \R^{n} }^p  \leq C.
\end{equation}
As $\mp$ is a probability measure, we may assume $p \geq 4$. We start with the first inequality.
Let $\varphi (t) = \norm{ u(t)}^2_2 +1$.
By It\^o's formula,
\begin{equation}
\label{cor1 p 1}
d \varphi (t)  = \varphi(t)(F(t)\;dt +d G_t),
\end{equation}
where
\[
F(t) =  \frac{ - \ip{A\nabla u , \nabla u} + \ip{f (u), u} +\norm{g (u)}_2^2}{ \norm{u}_2^2 +1} \quad \text{and}\quad G_t  = \int_0^t \frac{ \ip{ g_i (u), u }}{\norm{u}^2_2+1} \,dw_s^i.
\]
The solution of SDE \eqref{cor1 p 1} is explicitly given by
\[
\varphi (t)  = \varphi (0) \exp\midp{\int_0^t F(s) \;ds +G_t  - \frac{1}{2}\ip{G}_t }.
\]
By the assumptions we have $\ip{G}_t \leq 2(\Lambda+1)^2$ for all $t \leq 2$, therefore Novikov's condition ensures that
$$\exp\left[pG_t- \frac{p^2}{2}\ip{G}_t\right]$$
is a martingale for any $p>0$ and $0\leq t\leq 2$. This plus the fact $F(t) \leq 4 (\Lambda+1)^2$ give,
\begin{equation*}
\me \varphi^p (t)  = \varphi(0)^p\me\left[\exp p\smp{ \int_0^t F(s) \;ds +G_t- \frac{1}{2}\ip{G}_t}\right] \leq C \varphi(0)^p.
\end{equation*}
This implies the first inequality in \eqref{4.1}. Next, we show the second inequality in \rf{4.1}. Let
\[
X = \norm{u}_{\infty,[1,2]\times \R^n}\quad\text{and}\quad Y = \smp{\int_0^{2} \norm{u}_2^4\;dt }^{1/4}.
\]
By considering $u$ and $-u$ we have from {\sc Proposition} \ref{Itr Nor} with $\delta$ defined there,
\begin{equation}\label{yv}
\mp \lbr X > a , Y \leq  \frac{a}{M}\rbr \leq 2\,  e^{-M^\delta}
\end{equation}
for all $a \geq 1$ and $M \geq M_0$, hence
$$\mp\lbr X> a, Y\le\sqrt a\rbr\le 2\,e^{-a^{\delta/2}}$$
for $a\geq M_0^2$, assuming that $M_0\geq 1$. By the first inequality in \eqref{4.1}, we have
\[
\me Y^{2p} \leq  2^{(p-2)/2} \me  \int_0^2 \norm{u(t)}_2^{2p} \;dt \leq C.
 \]
Hence,
\[
\begin{split}
\me \norm{u}_{\infty, [1,2] \times \R^n}^p  &  =    p\int_0^\infty \mp (X > a) a^{p-1} \;da \\
& \leq  M_0^{2p} + p\int_{M^2_0}^\infty\mp\smp{ Y> \sqrt{a} } a^{p-1} \;da + p\int_{M_0^2}^\infty\mp\lbr X> a, Y\le \sqrt a\rbr a^{p-1}\;da.
\end{split}
\]
The second term is bounded by $\me Y^{2p}$, and the third term is finite by \rf{yv}.  This proves the second inequality in \eqref{4.1}.
\end{proof}

We can now prove the almost sure H\"older continuity result, which we state again for easy reference.
\begin{theorem}
\label{Thm Holder1} Let $u$ be a solution of the SPDE
$$\partial_t u = \divg ( A \nabla u) + f (t,x, u) + g_i (t,x, u) \dot{w}^i_t$$
whose coefficients satisfy the conditions stated in {\sc Section 1}. Then there exists a positive exponent $\alpha=\alpha (n, \lambda, \Lambda)$ such that almost surely $u \in C^{\alpha} ([T,2T] \times \R^n)$ for all $T>0$. Furthermore, for every $p>0$, there is a constant $C=C (n, \lambda, \Lambda, T, p)$ such that
\[
\me \norm{u}_{C^{\alpha} ([T,2T] \times \R^n) }^p \leq C\smp{\norm{u_0}_{L^2(\mr^n)} +\Vert K\Vert_{L^2(\mr^n)} + \Vert K\Vert_{L^\infty(\mr^n)}}^p.
\]
\end{theorem}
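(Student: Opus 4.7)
The plan is to pass from the global $L^\infty$ moment bound of Proposition \ref{Estimate} to Hölder regularity via a De Giorgi--Nash--Moser style oscillation decay on parabolic cylinders, adapted to the stochastic setting. By a time translation and parabolic rescaling (which sends $A \mapsto A_r$, $f \mapsto r^2 f_r$, $g \mapsto r g_r$, with structural constants preserved for $r \leq 1$), proving Hölder regularity of $u$ on $[T,2T] \times \R^n$ reduces to establishing, for any solution $v$ on a unit cylinder $Q_1 = (-1,0] \times B_1(0)$ with bounded $\|v\|_\infty$, a quantitative oscillation decay
$$
\osc_{Q_{r/2}(z_0)} v \leq \theta \osc_{Q_r(z_0)} v \quad \text{for all dyadic } r \leq 1,
$$
with $\theta \in (0,1)$ and with good moment control on the failure event. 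Summing a geometric series then produces Hölder continuity with exponent $\alpha = -\log_2 \theta$ at any interior point $z_0$, and a covering argument extends this to all of $[T,2T] \times \R^n$. The $L^p$-moment bound in the statement comes from taking $p$-th moments of the failure probabilities at each dyadic scale and invoking Proposition \ref{Estimate} for the base case $\osc_{Q_1} v$.

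The heart of the matter is the oscillation decay step. Normalize so that $\osc_{Q_r} v = 1$, with $M = \sup_{Q_r} v$, $m = \inf_{Q_r} v$. Consider the two non-negative truncations $v^+ = (v - (M+m)/2)^+$ and $v^- = ((M+m)/2 - v)^+$, both bounded by $1/2$ on $Q_r$ and with disjoint positive sets. A measure-theoretic pigeonhole shows that at least one of them has $L^2$-mass bounded by a definite fraction of $|Q_r|$ on the lower half of $Q_r$; say this holds for $v^+$. A local version of the stochastic De Giorgi iteration in Proposition \ref{S itr prop}, applied to $v^+$ on $Q_r$ with a smooth spatial cutoff, shows that if the $L^2$-mass of $v^+$ is small on the lower half, then $v^+ \leq 1/2 - \epsilon$ on $Q_{r/2}$ with probability at least $1 - \exp(-c M^{1/(n+1)})$, on the event controlled by Proposition \ref{Itr Nor}. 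Hence $\sup_{Q_{r/2}} v \leq M - \epsilon$, i.e., $\osc_{Q_{r/2}} v \leq 1 - \epsilon$, as desired. Summing the exponentially small failure probabilities over the dyadic scales $r_k = 2^{-k}$ and the relevant lattice of centers yields the full Hölder estimate with $L^p$-moment control, for any $p$.

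The main obstacle I anticipate is adapting Propositions \ref{S itr prop} and \ref{Itr Nor} to a bounded spatial cylinder. Both are formulated on all of $\R^n$, where the global energy inequality holds cleanly after integration by parts. Introducing a smooth spatial cutoff $\eta$ supported in $B_r(x_0)$ produces commutator terms when applying It\^o's formula to $\|\eta v_{k,a}\|_2^2$: a gradient-of-cutoff term involving $(\nabla \eta) \cdot A \nabla v$ appears in the energy inequality and must be absorbed in the ellipticity by the standard Caccioppoli trick, while the stochastic correction $\|\eta g\|_2^2$ is handled by the pointwise growth bound on $g$. The goal is to reproduce an iterative inequality of the same shape as \rf{S itr prop ineq}, with constants uniform in the scale $r$, so that the rest of the Section 2--3 machinery applies verbatim to the truncation $v^+$. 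Once this local version is in place, the oscillation decay and the iteration across dyadic scales follow from the standard De Giorgi template, and the probabilistic bookkeeping is handled by the exponential tail of Proposition \ref{Itr Nor}, which is already strong enough to give all moments of the Hölder norm.
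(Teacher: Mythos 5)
Your approach diverges substantially from the paper's, and as written it has a real gap. The paper does \emph{not} run an oscillation-decay (second De Giorgi lemma) argument at all. Instead, after obtaining the global $L^\infty$ moment bound of {\sc Proposition}~\ref{Estimate}, it splits $u=\phi+v$ where $v$ solves the constant-coefficient equation $d_tv=\Delta v\,dt+g_i(u)\,dw_t^i$ with $v(2^{-1})=0$. The $L^\infty$ bound on $u$ plus the growth assumption puts $g(u)$ in $L^p$, so Krylov's $W^{2,p}$-theory gives $v\in C^{\alpha_1}$ with moment estimates; and $\phi=u-v$ then satisfies a \emph{deterministic} divergence-form parabolic equation pathwise (equation \eqref{Holder p 1}), to which the classical measurable-coefficient regularity theory (Lieberman) applies $\omega$ by $\omega$. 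All the stochastic difficulty is thus confined to the $L^\infty$ estimate of Sections 2--3; no localized stochastic iteration is ever needed.

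The gap in your route is the oscillation-decay step itself. The pigeonhole argument only gives that one of $v^{\pm}$ has positivity set of measure at most half of $|Q_r|$ on the lower half-cylinder; but the first De Giorgi lemma (the analogue of {\sc Propositions}~\ref{S itr prop} and~\ref{Itr Nor}) requires the relevant $L^2$-mass to be below a small universal threshold $\delta_0$, not merely below $1/2$. Bridging ``measure at most one half'' to ``measure at most $\delta_0$'' is exactly the content of De Giorgi's isoperimetric (second) lemma, which quantifies that a function cannot drop from the level $1/2$ to the level $0$ on sets of comparable measure without its gradient paying a price. In the stochastic setting this lemma must itself be established with exponentially small failure probability at every dyadic scale and every center, and it does not follow from the Section 2--3 machinery, which only produces the sup bound from small mass. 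You acknowledge the localization issue (cutoffs, Caccioppoli commutators) but not this one, and it is the harder of the two. There is also a quantitative mismatch: {\sc Proposition}~\ref{Itr Nor} is proved only for levels $a\ge 1$ under the normalization $\Vert K\Vert_\infty\le 1$, whereas after normalizing $\osc_{Q_r}v=1$ the relevant truncation levels are of size $\epsilon<1$ and the rescaled forcing terms no longer satisfy the same hypotheses uniformly in $r$; the cancellation of $a^{2/(n+1)}$ that drives the proof of {\sc Proposition}~\ref{Itr Nor} is not available in that regime. A direct stochastic oscillation-decay argument may well be possible, but it is a separate and substantial piece of work, whereas the Krylov splitting reduces the whole step to two known black boxes.
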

\begin{proof} By scaling it suffices to assume $T=1$, $ \norm{u_0}_2 \leq 1$ and $\Vert K\Vert_\infty+\Vert K\Vert_2\le 1$.  Following a suggestion of Professor Nicolai Krylov and the approaches used in Debussche, De Moor and Hofmanova\cite{DMH14}, we consider the solution $v$ of an SPDE with the same stochastic perturbation but simpler diffusion coefficients:
\[
d_t v  =\Delta v\, dt + g_i(u ) dw^i_t, \quad   v(2^{-1}) =0 .
\]
The function $\phi = u- v$ satisfies
\begin{equation}
\label{Holder p 1}
\partial_t \phi  = \divg (A \nabla \phi ) + f(\phi + v)  + \divg(A \nabla v) -\Delta v , \quad \text{ on } [2^{-1},2] \times \R^n.
\end{equation}
From  the linear growth assumption (1) for $g$ and {\sc Proposition} \ref{Estimate}, we have
\[
\me  \int_{2^{-1}}^{2} \norm{g(u)}_{p }^p \;dt\le C.
\]
According to Krylov's $W^{2,p}$-theory (see Krylov~\cite{Kry96}) $v \in C^{\alpha_1} ([2^{-1},2] \times \R^n)$ for some exponent $\alpha_1$. Furthermore, we have the estimates
\begin{equation}
\label{P H 1}
\me \norm{v}_{C^{\alpha_1} ( [2^{-1}, 2] \times \R^n )}^p \leq \me \norm{ g (u) }_{L^{p} ([2^{-1},2] \times \R^n )} \leq C
\end{equation}
and
\begin{equation}
\label{P H 3}
\me \int_{1/2}^2 \norm{D^2 v}_{W^{-1,p}}^p \;dt \le C_p.
\end{equation}
Since \eqref{Holder p 1} does not have a stochastic perturbation, the usual regularity theory (see Lieberman~\cite[Ch. VI]{Lieb}) applies and we have $\phi \in C^{\alpha_2} ( [1, 2] \times \R^n )$ for some small exponent $\alpha_2 \in (0,1) $ and
\[
\begin{split}
\norm{\phi }_{C^{\alpha_2} ( [1,2]\times \R^n )} & \leq C \smp{ \norm{\phi  }_{\infty, [2^{-1},2] \times \R^n}   +  \norm{D^2 v}_{L^p ([1,2] , W^{-1,p} )} } \\
& \leq C \smp{ \norm{u  }_{\infty, [2^{-1},2] \times \R^n} +\norm{v  }_{\infty, [2^{-1},2] \times \R^n}    +  \norm{D^2 v}_{L^p ([2^{-1},2] ,
W^{-1,p} )} }.
\end{split}
\]
Using the estimates \eqref{P H 1} and \eqref{P H 3} we conclude that
$\me \norm{\phi}^p_{C^{\alpha_2 } ([1,2] \times \R^n  )} \leq C$.
From this inequality, \eqref{P H 1} and $u = \phi+v$, we obtain the desired inequality $\me \norm{u}^p_{C^{\alpha} [1,2] \times \R^n} \leq C$
with $\alpha  = \min \{\alpha_1, \alpha_2\}$.
\end{proof}


\begin{thebibliography}{alpha}
\bibitem{BNVW08} Brz\`ezniak, Z.,   van Neerven, J.M.A.M.,  Veraar, M.C.  and Weis, L.,  It\^o's formula in UMD Banach spaces
and regularity of solutions of the Zakai equation, {\it J. Differ. Equ.}., \textbf{245} (2008), 30--58.

\bibitem{CV10} Caffarelli L. and Vasseur, A.,  Drift diffusion equations with fractional diffusion and the quasi-geostrophic equation, {\it Ann. of Math.}, (2) \textbf{171} (2010), no. 3, 1903-1930.

\bibitem{CV10-1} Caffarelli L.  and Vasseur, A., The De Giorgi method for regularity of solutions of elliptic equations and its applications to fluid dynamics,  {\it Discrete Contin. Dyn. Syst.},  Ser. S \textbf{3} (2010), no. 3, 409-427.

\bibitem{Survey} Carmona, R. and Rozovskii B. L. (ed.), {\it Stochastic Partial Differential Equations: Six Perspectives}, Mathematical Surveys and Monographs Series 64. Providence, RI: American Mathematical Society, 1998.

\bibitem{DMH14} Debussche, A.,  De Moor,  S. and Hofmanova, M. ,  A regularity result for quasilinear stochastic partial differential equations of parabolic type,  arXiv:1401.6369.


\bibitem{GT} Gilbarg, D. and Trudinger, N.S.,   \textit{Elliptic Partial Differential Equations of Second Order,} Reprint of the 1998 edition, Classics in Mathematics, Springer-Verlag, Berlin, 2001.

\bibitem{GSV13} Glatt-Holtz, G., \v{S}ver\'ak, V.  and Vicol, C.,  On Inviscid limits for the stochastic Navier-Stokes equations and related models,  arXiv:1302.0542v1.

\bibitem{Kry99} Krylov, N.V., An analytic approach to SPDEs, In {\it Stochastic Partial Differential Equations: Six Perspectives}, Mathematical Surveys and Monographs, vol. 64, pp. 185-242. AMS, Providence, RI 1999.

\bibitem{Kry10} Krylov, N.V., It\^o's formula for the $L_p$-norm of stochastic $W^1_p$-valued processes, {\it Probability Theory and Related Fields},  \textbf{147}, no. 3 (2010): 583-605.

\bibitem{Kry96} Krylov, N.V.,  On $L_p$-theory of stochastic partial differential equations in the whole space, {\it SIAM J. Math. Anal.},  \textbf{27} (1996), no. 2, 313-340.

\bibitem{Lieb} Lieberman, G.M., \textit{Second Order Parabolic Differential Equations,} World Scientific Publishing Co., Inc., River Edge, NJ, 1996.

\bibitem{Nor86} Norris,  J., Simplified Malliavin calculus, in {\it S\'eminaire de Probabilit\'es, XX, 1984/85}, 101-130, Lecture Notes in Math., \textbf{1204}, Springer, Berlin, 1986.

\bibitem{Par75} Pardoux, \'E.,  {\it Equations aux Deriv\'ees Partielles Stochastiques Monotones}, Th\`ese, Univ. Paris-Sud, 1975.

\bibitem{Par07} Pardoux, \'E., \textit{Stochastic Partial Differential Equations}, Lecture notes given in Fudan, 2007.

\bibitem{PZ92}  da Prato, G. and Zabczyk, J., \textit{Stochastic Equations in Infinite Dimensions}, Encyclopedia of Mathematics
and its Applications, vol. 44,  Cambridge University Press, Cambridge, 1992.

\bibitem{RY99} Revuz, D., and Yor, M., \textit{Continuous Martigales and Brownian Motion}, Vol. 293. Springer Science \& Business Media, 1999.

\bibitem{Roz90} Rozovskii, B.L., \textit{Stochastic Evolution Systems}, Kluwer, Dordrecht, 1990.

\bibitem{Viot76} Viot, M., Solutions faibles d'\'equations aux d\'eriv\'ees partielles non lin\'eaires. Th\`ese, Universit\'e Pierre et Marie Curie, Paris, 1976.
\end{thebibliography}
\end{document}